\documentclass[11pt]{article}

\usepackage[margin=1in]{geometry}
\usepackage[T1]{fontenc}
\usepackage{lmodern}
\usepackage{amsmath,amssymb,amsfonts,amsthm,mathtools}
\usepackage{bm}
\usepackage{microtype}
\usepackage{enumitem}
\usepackage[numbers,sort&compress]{natbib}
\usepackage{url}
\usepackage[hidelinks]{hyperref}

\setlist[enumerate]{leftmargin=*,itemsep=0.45em,topsep=0.5em}

\newcommand{\R}{\mathbb{R}}
\newcommand{\norm}[1]{\left\lVert #1\right\rVert}
\newcommand{\ip}[2]{\left\langle #1,#2\right\rangle}
\newcommand{\cF}{\mathcal{F}}
\newcommand{\prox}{\operatorname{prox}}
\newcommand{\doi}[1]{\url{https://doi.org/#1}}

\newtheorem{theorem}{Theorem}
\newtheorem{proposition}{Proposition}
\newtheorem{lemma}{Lemma}
\theoremstyle{definition}
\newtheorem{definition}{Definition}

\title{Last-Iterate Performance of Gradient Descent and Relaxed Proximal Point via $s$-Composability}
\author{Salah R. Chikhi\\
\small \'Ecole Polytechnique}
\date{September 2026}

\begin{document}
\maketitle

\begin{abstract}
The notion of $s$-composability was introduced to compose optimized stepsize
schedules while preserving sharp guarantees. We show that the same joint
potential has a second use: it can serve directly as a certificate of sharp
last-iterate performance. We develop this viewpoint for constant and silver
schedules. For the constant schedule, we prove the $s$-composability statement
posed as an open question in \cite{GrimmerShuWang2025}, at every finite
horizon, through an explicit nonnegative smooth-convex interpolation
certificate. The result identifies the unique constant stepsize minimizing the
worst-case final gradient norm of smooth convex gradient descent under an
initial-distance bound, thereby proving the optimal stepsize, value, and
uniqueness predicted by Conjecture~3 in \cite{TaylorHendrickxGlineur2017}
without resolving its full worst-case curve. Through the Moreau envelope, the
same constant is also the unique constant relaxation minimizing the worst-case
final residual of relaxed proximal point. More generally, for every positive
$s$-composable schedule, we determine the exact worst-case final proximal
objective-gap constant and give a matching one-dimensional example. Applying
this result to the already $s$-composable original silver schedule yields its
exact last-iterate objective-gap constant, closing a question left open by
\cite{WangMaYangZhou2025}.
\end{abstract}

\noindent\textbf{Keywords:} gradient descent; proximal point method; performance estimation
\medskip

\section{Introduction}
\label{sec:introduction}

The performance of the last iterate is a basic but often delicate question in
first-order optimization.  Even for gradient descent on a smooth convex
function, the exact behavior of the terminal gradient or objective value can
depend sharply on the choice of stepsizes, and tight guarantees are known only
for particular criteria and parameter regimes.  Similar issues arise for
proximal methods, where objective accuracy and stationarity of the terminal
proximal point need not be governed by the same worst-case mechanism.  Recent
work on nonstandard stepsize schedules has shown that carefully chosen long
steps can substantially improve worst-case guarantees, making the exact
terminal behavior of such schedules a natural question in its own right.

A central object in this recent literature is \(s\)-composability, introduced
in Definition~3 of \cite{GrimmerShuWang2025}. Its original purpose is
compositional: an \(s\)-composable schedule carries a joint potential that
allows optimized schedules to be concatenated while preserving sharp rates.
The same potential simultaneously controls the terminal objective gap,
gradient norm, and distance to the solution set, so it can also be used
directly for last-iterate analysis. We develop this viewpoint on two schedules:
a constant schedule, whose \(s\)-composability was left open by
\cite{GrimmerShuWang2025}, and the silver schedule, for which composability is
already known.

\subsection{Related work and open questions}
\label{subsec:related}
Our results connect three strands of exact worst-case analysis that have so far
been studied largely separately.

\paragraph{Final-gradient performance of constant-step gradient descent.}
Consider \(n\) iterations of gradient descent with a constant normalized
stepsize \(\alpha\), and suppose only that the initial distance
\(\norm{x_0-x_*}\) is bounded.  A basic unresolved question is the exact
worst-case value of the final gradient norm \(\norm{\nabla f(x_n)}\).
Performance-estimation methods \cite{DroriTeboulle2014,TaylorHendrickxGlineur2017} suggest that, for
\(0<\alpha\leq2\), the answer is governed by the larger of two simple
obstructions:

$$
    \frac{1}{1+n\alpha}
    \qquad\text{and}\qquad
    |1-\alpha|^n.
$$

The first comes from instances on which gradient descent makes only limited
progress, while the second comes from oscillation on a quadratic objective.
Conjecture~3 in \cite{TaylorHendrickxGlineur2017}, specialized
to smooth convex functions, predicts that these two examples describe the
entire worst-case curve.  That conjecture is still open.  Importantly for this
paper, however, the two branches intersect at a unique stepsize, so the
conjecture already predicts a concrete candidate for the \emph{best constant
stepsize}.  We prove that this candidate is indeed globally optimal and unique,
without needing to establish the full conjectured curve.

Several exact results nearby concern different performance measures.
For example, \cite{Kim2024} determines the final objective-gap behavior and
studies the related final-gradient problem under a function-gap rather than a
distance normalization.  \cite{RotaruGlineurPatrinos2026} studies the minimum
gradient norm attained \emph{along} a constant-step trajectory, again under a
different normalization.  More recently, \cite{Zhang2026} establishes the
certificate needed for the constant stepsize arising in the classical
minimax objective-gap problem.  None of these results determines the optimal
constant stepsize for the final-gradient, initial-distance criterion considered
here.

\paragraph{\(s\)-composability and constant schedules.}
A different line of work asks how optimized stepsize schedules can be combined
while retaining sharp worst-case guarantees.  Motivated in part by the
accelerated ``silver'' schedules \cite{AltschulerParrilo2025},
\(s\)-composability was introduced in \cite{GrimmerShuWang2025}: a schedule is
\(s\)-composable when it satisfies a joint potential inequality strong enough
to support such compositions.  For a constant schedule of length \(n\), the
algebraic conditions required by \(s\)-composability determine a unique
candidate stepsize.  Example~2 of \cite{GrimmerShuWang2025} verified the first
two horizons but left open whether this candidate is actually
\(s\)-composable for every \(n\geq3\).  We establish this statement for all
finite horizons.  The key observation for our purposes is that the resulting joint
potential also gives exactly the terminal-gradient control needed to certify
the optimal constant stepsize described above.

\paragraph{Last-iterate guarantees for relaxed proximal point.}
There is a closely related proximal problem.  Through the Moreau envelope,
relaxed proximal point can be represented as gradient descent on a smooth
convex function, so stepsize results can often be transferred between the two
settings.  For constant relaxation parameters,
Theorem~3.1 and Remark~3.1 of \cite{WangMaYangZhou2025} derive tight
last-iterate objective and residual guarantees in their stated range.  We show
that, when the final residual is optimized over \emph{all} positive constant
relaxations, the unique optimizer is exactly the same constant as in the
smooth final-gradient problem.

The same authors also analyze the original silver schedule.  Its
\(s\)-composability was already known from \cite{GrimmerShuWang2025}, and
Theorem~5.3 of \cite{WangMaYangZhou2025} gives a tight bound for its final
proximal residual.  Their corresponding objective-gap bound, however, is
explicitly noted to be non-tight, leaving its exact last-iterate value open.
We determine that value exactly.

\subsection{Contributions}
\label{subsec:contributions}
We make three contributions.

\begin{enumerate}
\item \textbf{Constant-step \(s\)-composability.}
For every \(n\geq1\), we prove that the unique constant schedule compatible
with the two \(s\)-composability rate identities is indeed \(s\)-composable.
Writing its step as \(h_n\), the defining balance is
\[
    \frac{1}{1+n h_n}=(h_n-1)^n.
\]
This proves the statement posed as an open question in
Example~2 of \cite{GrimmerShuWang2025}. 

\item \textbf{Exact constant-parameter selection for the last iterate.}
We show, through $s$-composability, that the same constant \(h_n\) is the unique minimizer of two
worst-case terminal criteria.  For smooth convex gradient descent, it uniquely
minimizes the worst-case norm of the final gradient under an initial-distance
bound.  This proves the optimal stepsize, optimal value, and uniqueness
predicted by the step-selection consequence of
Conjecture~3 in \cite{TaylorHendrickxGlineur2017}, without resolving the
conjectured worst-case curve away from its minimizer.  Through the Moreau
envelope, the same \(h_n\) is also the unique constant relaxation minimizing
the worst-case final residual of relaxed proximal point.  In both problems,
the displayed balance above equates the progress obstruction with the
oscillation obstruction.

\item \textbf{Exact proximal objective gaps for \(s\)-composable schedules and the silver schedule.}
The upper-bound mechanism is not specific to constant schedules.  For every
positive \(s\)-composable schedule, we determine the exact worst-case final
proximal objective gap and exhibit a one-dimensional absolute-value instance
attaining equality.  We then apply this general result to the original silver
schedule, whose \(s\)-composability is already known from
Lemma~6 of \cite{GrimmerShuWang2025}.  For a silver block of length
\(N=2^m-1\), we obtain
\[
    F(z_N)-F_*
    \leq
    \frac{\norm{x_0-x_*}^2}{4\lambda(1+\sqrt2)^m},
\]
with equality.  This determines the exact objective-gap constant that
Theorem~5.3 of \cite{WangMaYangZhou2025} leaves unresolved in their discussion
of the silver-schedule objective estimate.
\end{enumerate}

The two schedules illustrate two uses of the same principle. For the constant
schedule, the main difficulty is to establish \(s\)-composability; once this is
done, the resulting terminal bound matches the lower examples and identifies
the exact optimal constant choice. For the silver schedule, \(s\)-composability
is already known, and Theorem~\ref{thm:general-proximal-gap} yields the
previously unknown sharp objective guarantee. Together, these results show that
\(s\)-composability can serve as a certificate for last-iterate performance.

Section~\ref{sec:preliminaries} formally recalls \(s\)-composability and
derives the terminal consequences used throughout the paper.
Section~\ref{sec:constant} treats the constant schedule and establishes its
smooth and proximal last-iterate optimality.
Section~\ref{sec:certificate} gives a direct interpolation-certificate proof
of constant-step \(s\)-composability.
Section~\ref{sec:silver} applies the same terminal principle to the original
silver schedule and determines its exact proximal objective-gap constant.

\section{\(s\)-composability as a terminal certificate}
\label{sec:preliminaries}

We now formalize the terminal-certificate viewpoint.  The definition
below was designed to support composition, but the joint potential it contains
will be used here to extract last-iterate information.

Let \(\cF_{0,1}\) be the class of differentiable convex functions
\(f:\R^d\to\R\) whose gradient is \(1\)-Lipschitz and whose minimizer set is
nonempty.  Fix \(x_*\in\arg\min f\).  A positive schedule
\((h_0,\ldots,h_{N-1})\) generates
\begin{equation*}
 x_{k+1}=x_k-h_k\nabla f(x_k),
 \qquad k=0,\ldots,N-1.
\end{equation*}

\begin{definition}[\(s\)-composability]
\label{def:s-composable}
Following Definition~3 of \cite{GrimmerShuWang2025}, a positive schedule
\((h_0,\ldots,h_{N-1})\) is \(s\)-composable with rate \(\eta\) if
\begin{equation}
 \frac{1-\eta}{2}\norm{\nabla f(x_N)}^2
 +\frac{\eta^2}{2}\norm{x_N-x_*}^2
 +\eta(1-\eta)(f(x_N)-f_*)
 \leq \frac{\eta^2}{2}\norm{x_0-x_*}^2
 \label{eq:s-potential}
\end{equation}
for every \(f\in\cF_{0,1}\), and
\begin{equation*}
 \eta=\frac{1}{1+\sum_{k=0}^{N-1}h_k}
      =\prod_{k=0}^{N-1}(h_k-1).
\end{equation*}
\end{definition}

For the certificate construction in Section~\ref{sec:certificate}, we use the
standard smooth-convex interpolation inequality
Corollary~1 of \cite{TaylorHendrickxGlineur2017}.  Writing
\(f_i=f(x_i)\), \(g_i=\nabla f(x_i)\), and \(f_*=f(x_*)\), define
\begin{equation*}
 Q_{ij}:=2(f_i-f_j)-2\ip{g_j}{x_i-x_j}-\norm{g_i-g_j}^2\geq0.
\end{equation*}
Only this smooth-convex interpolation inequality is needed in the
certificate proof below.

We first record two schedule-independent consequences of this definition.
The first concerns the final gradient of smooth gradient descent; the second
transfers the same terminal control to relaxed proximal point.  

\begin{proposition}[Terminal consequences of \(s\)-composability]
\label{prop:terminal}
Let \((h_0,\ldots,h_{N-1})\) be \(s\)-composable with rate \(\eta\), and set
\begin{equation*}
 S:=\sum_{k=0}^{N-1}h_k=\frac{1-\eta}{\eta},
 \qquad S+1=\eta^{-1}.
\end{equation*}

\emph{(i) Smooth terminal gradient.}
For every convex \(L\)-smooth \(f\), if
\begin{equation*}
 x_{k+1}=x_k-\frac{h_k}{L}\nabla f(x_k),
\end{equation*}
then
\begin{equation*}
 \norm{\nabla f(x_N)}
 \leq
 \frac{L\norm{x_0-x_*}}{S+1}
 =L\eta\norm{x_0-x_*}.
\end{equation*}

\emph{(ii) Proximal terminal inequality.}
Let \(F:\R^d\to\R\cup\{+\infty\}\) be proper, closed, and convex with a
nonempty minimizer set, let \(\lambda>0\), and run
\begin{equation*}
 z_k=\prox_{\lambda F}(x_k),
 \qquad
 x_{k+1}=x_k+h_k(z_k-x_k).
\end{equation*}
With \(z_N=\prox_{\lambda F}(x_N)\),
\begin{equation*}
 F(z_N)-F_*
 +\frac{S+1}{\lambda}\norm{x_N-z_N}^2
 \leq
 \norm{x_0-x_*}
 \left\|\frac{x_N-z_N}{\lambda}\right\|.
\end{equation*}
Consequently,
\vspace{-2.5em}
\begin{align*}
 \left\|\frac{x_N-z_N}{\lambda}\right\|
 &\leq
 \frac{\norm{x_0-x_*}}{\lambda(S+1)},\\
 F(z_N)-F_*
 &\leq
 \frac{\norm{x_0-x_*}^2}{4\lambda(S+1)}.
\end{align*}
\end{proposition}

\begin{proof}
For part (i), scale to \(L=1\).  Divide the \(s\)-composability
potential in Definition~\ref{def:s-composable} by \(\eta^2/2\).  Since
\(S=(1-\eta)/\eta\), this gives
\begin{equation*}
 \norm{x_0-x_*}^2
 \geq
 \norm{x_N-x_*}^2
 +S(S+1)\norm{g_N}^2
 +2S(f_N-f_*),
\end{equation*}
where \(g_N=\nabla f(x_N)\).  Smooth convex interpolation between \(x_N\)
and \(x_*\) gives
\[
 \ip{g_N}{x_N-x_*}
 \geq
 f_N-f_*+\frac12\norm{g_N}^2.
\]
If \(g_N\neq0\), Cauchy--Schwarz and
the terminal potential just displayed imply
\[
 \norm{x_0-x_*}^2
 \geq
 \left(
 \frac{f_N-f_*}{\norm{g_N}}
 +\left(S+\frac12\right)\norm{g_N}
 \right)^2.
\]
Since \(f_N-f_*\geq\norm{g_N}^2/2\), the right-hand side is at least
\((S+1)^2\norm{g_N}^2\), proving the claimed terminal-gradient bound.  The
case
\(g_N=0\) is immediate.

For part (ii), let
\[
 F_\lambda(x)
 =\min_z\left\{F(z)+\frac{1}{2\lambda}\norm{z-x}^2\right\}
\]
be the Moreau envelope.  Standard Moreau-envelope identities
\cite{BauschkeCombettes2017} imply that \(\lambda F_\lambda\) is convex and
\(1\)-smooth, with
\[
 \nabla(\lambda F_\lambda)(x)=x-\prox_{\lambda F}(x).
\]
Thus the relaxed proximal iteration in part~(ii) is gradient descent with
schedule \((h_k)\) on \(\lambda F_\lambda\).  Set \(e=x_N-z_N\),
\(R=\norm{x_0-x_*}\), and \(\Delta=F(z_N)-F_*\).  Applying
the \(s\)-composability potential to \(\lambda F_\lambda\), using
\[
 \lambda F_\lambda(x_N)
 =\lambda F(z_N)+\frac12\norm e^2,
\]
and dividing by \(\eta^2/2\) yields
\begin{equation*}
 R^2
 \geq
 \norm{x_N-x_*}^2
 +S(S+2)\norm e^2
 +2S\lambda\Delta.
\end{equation*}
Because \(e/\lambda\in\partial F(z_N)\),
\[
 \ip e{z_N-x_*}\geq\lambda\Delta.
\]
For \(e\neq0\), expand \(x_N-x_*=(z_N-x_*)+e\) in the preceding proximal
terminal inequality and use the subgradient inequality.  This gives
\[
 R^2
 \geq
 \left(
 \frac{\lambda\Delta}{\norm e}
 +(S+1)\norm e
 \right)^2.
\]
Taking square roots and multiplying by \(\norm e/\lambda\) gives the joint
proximal inequality stated in the proposition; the case \(e=0\) is immediate.  Dropping the nonnegative
objective gap \(\Delta\) yields the residual bound, while maximizing the
resulting concave quadratic in \(\norm e\) yields the objective-gap bound.
\(\)
\end{proof}

The objective-gap consequence of Proposition~\ref{prop:terminal} is in fact
sharp for every positive \(s\)-composable schedule.

\begin{theorem}[Exact proximal objective gap for any \(s\)-composable schedule]
\label{thm:general-proximal-gap}
Let \((h_0,\ldots,h_{N-1})\) be a positive \(s\)-composable schedule with
rate \(\eta\).  For \(\lambda>0\), run
\[
 z_k=\prox_{\lambda F}(x_k),
 \qquad
 x_{k+1}=x_k+h_k(z_k-x_k),
\]
and set \(z_N=\prox_{\lambda F}(x_N)\).  Then
\begin{equation}
\label{eq:general-proximal-gap}
 \sup
 \frac{\lambda\bigl(F(z_N)-F_*\bigr)}
      {\norm{x_0-x_*}^2}
 =\frac{\eta}{4},
\end{equation}
where the supremum is over all dimensions, all proper closed convex
\(F\) with a nonempty minimizer set, all
\(x_*\in\arg\min F\), and all \(x_0\neq x_*\).
\end{theorem}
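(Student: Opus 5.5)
The plan is to prove the two inequalities separately. The upper bound comes directly from Proposition~\ref{prop:terminal}(ii). The lower bound comes from an explicit one-dimensional absolute-value instance that attains the bound exactly, so the supremum is in fact a maximum.

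\emph{Upper bound.} By Proposition~\ref{prop:terminal}(ii), every instance satisfies
\[
F(z_N)-F_*\le \frac{\norm{x_0-x_*}^2}{4\lambda(S+1)}.
\]
Since \(S+1=\eta^{-1}\) by the definition of \(s\)-composability, this becomes
\[
\lambda(F(z_N)-F_*)/\norm{x_0-x_*}^2\le \eta/4.
\]
This is the only place where \(s\)-composability is used.

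\emph{Lower bound.} To attain equality, every inequality in the proof of Proposition~\ref{prop:terminal}(ii) must be tight. In particular \(\norm{e}\) must equal the maximizer of the concave quadratic, namely \(\norm e=R/(2(S+1))\), where \(R=\norm{x_0-x_*}\). This suggests a residual \(e=x_k-z_k\) that stays constant along the run, which is what an absolute value produces away from its kink.

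Take \(d=1\), \(F(z)=c|z|\), \(x_*=0\), \(x_0=R>0\), and choose \(c\) so that \(\lambda c=R/(2(S+1))\). The prox is soft thresholding: for \(x>\lambda c\) we have \(x-\prox_{\lambda F}(x)=\lambda c\). While the iterates stay above \(\lambda c\), the update is therefore \(x_{k+1}=x_k-h_k\lambda c\). Because every \(h_k>0\), the iterates decrease monotonically, so it suffices to check the last one:
\[
x_N=R-S\lambda c=R\,\frac{S+2}{2(S+1)}>\frac{R}{2(S+1)}=\lambda c .
\]
Hence all \(x_0,\dots,x_N\) exceed \(\lambda c\) and the affine description holds throughout. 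Then \(z_N=x_N-\lambda c=R/2\), so
\[
F(z_N)-F_*=\frac{cR}{2}=\frac{R^2}{4\lambda(S+1)}.
\]
This gives \(\lambda(F(z_N)-F_*)/R^2=1/(4(S+1))=\eta/4\), matching the upper bound.

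The main point needing care is verifying that the iterates never cross the threshold \(\lambda c\), so that the soft-thresholding map stays in its affine regime. This is handled by the positivity of the schedule together with the explicit value of \(x_N\). Notably, the construction uses only \(S\) and works for any positive schedule; \(s\)-composability is needed only to make the upper bound valid.
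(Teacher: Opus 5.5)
Your proposal is correct and follows the paper's proof essentially verbatim: the upper bound is read off from Proposition~\ref{prop:terminal}(ii) using $S+1=\eta^{-1}$, and equality is attained by the same one-dimensional instance $F=a|x|$ with $\lambda a=R/(2(S+1))$, where positivity of the schedule keeps every iterate above the soft-thresholding threshold, giving $z_N=R/2$. Your motivation for this choice of $a$, namely that tightness forces $\norm{e}=R/(2(S+1))$, the maximizer of the concave quadratic, is a helpful addition that the paper leaves implicit.
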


\begin{proof}
Set
\[
 S:=\sum_{k=0}^{N-1}h_k=\frac{1-\eta}{\eta},
 \qquad S+1=\eta^{-1}.
\]
The upper bound follows directly from Proposition~\ref{prop:terminal}:
\[
 F(z_N)-F_*
 \leq
 \frac{\norm{x_0-x_*}^2}{4\lambda(S+1)}
 =
 \frac{\eta}{4\lambda}\norm{x_0-x_*}^2.
\]

For equality, work in one dimension.  Fix \(R>0\) and take
\[
 x_*=0,\qquad x_0=R,\qquad
 F(x)=a|x|,
 \qquad
 a:=\frac{R}{2\lambda(S+1)}.
\]
Let \(T_k=\sum_{j=0}^{k-1}h_j\), with \(T_0=0\).  Since the schedule is
positive, \(0\leq T_k\leq S\).  We claim inductively that
\[
 x_k=R-\lambda aT_k,
 \qquad
 z_k=x_k-\lambda a
 \quad (0\leq k\leq N).
\]
Indeed,
\[
 x_k
 \geq R-\lambda aS
 =\frac{R(S+2)}{2(S+1)}
 >\frac{R}{2(S+1)}
 =\lambda a,
\]
so the soft-thresholding formula gives \(z_k=x_k-\lambda a\), and the
relaxed proximal update gives
\(x_{k+1}=x_k-h_k\lambda a=R-\lambda aT_{k+1}\).
At the terminal point,
\[
 z_N
 =R-\lambda a(S+1)
 =\frac{R}{2},
\]
and therefore
\[
 F(z_N)-F_*
 =a\frac{R}{2}
 =\frac{R^2}{4\lambda(S+1)}
 =\frac{\eta R^2}{4\lambda}.
\]
Thus equality is attained in \eqref{eq:general-proximal-gap}. \(\)
\end{proof}

Proposition~\ref{prop:terminal} and
Theorem~\ref{thm:general-proximal-gap} provide the terminal consequences used
for both schedules below. For the constant schedule, we first need to establish
\(s\)-composability; for the silver schedule, known \(s\)-composability allows
us to apply them directly.

\section{The constant schedule}
\label{sec:constant}

We now specialize to \(n\) repeated steps of the same size.  For a constant
schedule \((h,\ldots,h)\), the two rate identities in
Definition~\ref{def:s-composable} require
\begin{equation}
 \frac{1}{1+nh}=(h-1)^n.
 \label{eq:constant-balance}
\end{equation}
Any positive solution must lie in \((1,2)\).  Indeed, for \(h\geq2\) the
right-hand side is at least one while the left-hand side is strictly below
one.  If \(0<h<1\), the right-hand side is negative when \(n\) is odd; when
\(n\) is even,
\[
 (1-h)^n<\frac{1}{(1+h)^n}\leq\frac{1}{1+nh},
\]
where the last inequality is Bernoulli's inequality.  Thus no solution is
lost by writing \(h=1+r\) with \(r\in(0,1)\).

Define \(r_n\in(0,1)\) as the unique solution of
\begin{equation*}
 r_n^n\bigl(1+n(1+r_n)\bigr)=1,
\end{equation*}
and set
\begin{equation*}
 h_n=1+r_n,
 \qquad
 \eta_n=r_n^n=\frac{1}{1+nh_n}.
\end{equation*}
The root is unique because the left-hand side of its defining equation is
continuous and strictly increasing on \([0,1]\), equals zero at \(0\), and
exceeds one at \(1\).  Hence \(h_n\) is the unique positive constant compatible
with the two rate identities.  We will repeatedly use
\begin{equation*}
 1-\eta_n=nh_n\eta_n,
 \qquad
 r_n^{-n}=1+nh_n.
\end{equation*}

The first values are \(h_1=\sqrt{2}\) and \(h_2=3/2\).  The defining
identity also gives a simple large-horizon picture.  Indeed,
\(r_n^n(1+n(1+r_n))=1\) forces \(r_n\to1\), and therefore
\[
 h_n\to2,
 \qquad
 \eta_n=\frac{1}{1+nh_n}\sim\frac{1}{2n}.
\]
Thus the normalized constant step \(h_n\) approaches \(2\) from below as the
horizon grows (equivalently, the gradient-descent step \(h_n/L\) approaches
\(2/L\)).

\subsection{The open composability question}

The rate identities make \((h_n,\ldots,h_n)\) the only possible constant
\(s\)-composable schedule of length \(n\).  The question left open in
Example~2 of \cite{GrimmerShuWang2025} is whether this necessary candidate
satisfies the potential inequality itself for every \(n\geq3\).

\begin{theorem}[Constant-step \(s\)-composability]
\label{thm:main}
For every integer \(n\geq1\), the length-\(n\) constant schedule
\((h_n,\ldots,h_n)\) is \(s\)-composable with rate \(\eta_n\).  In
particular, every \(f\in\cF_{0,1}\), every \(x_*\in\arg\min f\), and every
trajectory \(x_{i+1}=x_i-h_n\nabla f(x_i)\) satisfy
\begin{equation*}
 \frac{1-\eta_n}{2}\norm{\nabla f(x_n)}^2
 +\frac{\eta_n^2}{2}\norm{x_n-x_*}^2
 +\eta_n(1-\eta_n)(f(x_n)-f_*)
 \leq
 \frac{\eta_n^2}{2}\norm{x_0-x_*}^2.
\end{equation*}
\end{theorem}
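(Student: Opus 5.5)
We will prove the potential inequality of Theorem~\ref{thm:main} with a performance-estimation certificate. We exhibit nonnegative multipliers \(\lambda_{ij}\geq0\) on the interpolation quantities \(Q_{ij}\), for \(i,j\in\{*,0,1,\ldots,n\}\), such that
\[
 \frac{\eta_n^2}{2}\norm{x_0-x_*}^2
 -\frac{1-\eta_n}{2}\norm{g_n}^2
 -\frac{\eta_n^2}{2}\norm{x_n-x_*}^2
 -\eta_n(1-\eta_n)(f_n-f_*)
 =\frac12\sum_{i,j}\lambda_{ij}Q_{ij}+\frac12\norm{\textstyle\sum_k c_k g_k + c(x_0-x_*)}^2
\]
identically in the variables \(x_0-x_*\), \(g_0,\ldots,g_n\), \(f_0-f_*,\ldots,f_n-f_*\). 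Here \(x_k=x_0-h_n\sum_{j<k}g_j\), and we set \(g_*=0\). We normalize \(x_*=0\) and \(f_*=0\). Because every \(Q_{ij}\) is nonnegative, this identity gives the inequality. It suffices to treat the Gram matrix in the basis \((x_0,g_0,\ldots,g_n)\), so the identity reduces to matching a linear form in the \(f_k\) and a quadratic form.

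\emph{Step 1: the function-value equations.} The coefficient of each \(f_k\) must vanish, except that \(f_n\) must carry the net coefficient \(-\eta_n(1-\eta_n)\). This is a flow-conservation condition on \(\lambda\). Guided by the known \(n=1,2\) certificates of \cite{GrimmerShuWang2025} and by the constant-step certificates of Drori--Teboulle and Kim type, we use a sparse support: consecutive pairs \((k,k+1)\) and \((k+1,k)\), star pairs \((*,k)\) and \((k,*)\), and the pairs \((k,n)\). We parametrize the multipliers by powers of \(r_n\), for example \(\lambda_{*k}\propto r_n^{n-k}\)-type weights. The reason for this choice is that the oscillation obstruction \((h-1)^n=r_n^n\) suggests a geometric structure. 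The identities \(\eta_n=r_n^n\) and \(1-\eta_n=nh_n\eta_n\) are exactly what make the flow balance close at the endpoint.

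\emph{Step 2: the quadratic identity.} Once the \(\lambda\)'s are fixed, we compute the residual quadratic form. We aim to show that it is rank one, or more generally positive semidefinite, with explicit vector \(c\). The coefficients of \(\norm{x_0}^2\) and of \(\ip{x_0}{g_k}\) are forced by \(\eta_n^2/2\) and by the star multipliers, which determines \(c\). We then verify the \(g_k g_\ell\) entries term by term. These entries involve sums like \(\sum_{j} r_n^j\) and \(h_n\)-weighted partial sums, and they telescope using \(r_n^{-n}=1+nh_n\).

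\emph{Step 3: nonnegativity.} We check \(\lambda_{ij}\geq0\), using \(r_n\in(0,1)\) and \(h_n\in(1,2)\).

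\textbf{Main obstacle.} The hard part is guessing the correct multiplier pattern for general \(n\). A naive tridiagonal pattern probably leaves an indefinite residual. Our first attempt is to solve the small cases \(n=1,2,3\) numerically via the PEP SDP, read off closed forms in \(r_n\), and then verify the general pattern symbolically. If the residual turns out not to be rank one, we will instead prove it PSD by exhibiting an explicit \(LDL^\top\) factorization with nonnegative pivots. Proving the sign of those pivots for all \(n\) would then be the crux, and we would handle it with monotonicity in \(r_n\) together with the defining balance \(r_n^n(1+n(1+r_n))=1\).
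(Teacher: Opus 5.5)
\textbf{Verdict: there is a genuine gap.} Your proposal is a search plan, not a proof. The whole content of Theorem~\ref{thm:main} is an explicit nonnegative certificate valid for every $n$. Your Steps~1--3 and your ``main obstacle'' defer exactly that to a numerical exploration of small $n$. Moreover, part of the structure you do commit to is not available.

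\emph{Forced structure.} Since $x_n=x_0-h_n\sum_{j<n}g_j$ (with $x_*=0$), the coefficient of $\norm{x_0}^2$ on your left-hand side is $\frac{\eta_n^2}{2}-\frac{\eta_n^2}{2}=0$, and no $Q_{ij}$ contains $\norm{x_0}^2$. Hence $c=0$. Matching $\ip{x_0}{g_k}$ then forces $\lambda_{*k}=\eta_n^2h_n$ for all $k<n$ and $\lambda_{*n}=0$. These weights are constant, not geometric in $r_n$. Likewise $\lambda_{k*}=0$ for all $k$: the potential is an equality on the one-dimensional Huber instances $\phi_\tau$ with $0<\tau<\norm{x_0}/(1+nh_n)$, and there $Q_{k*}=2\tau(x_k-\tau)>0$. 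Divide by $\eta_n^2/2$ and remove these forced star terms. What remains is exactly the paper's reduction through Lemma~\ref{lem:history}. With $\mu_{ij}=\lambda_{ij}/\eta_n^2$ and a PSD matrix $(s_{ab})$ acting on the gradients only, you need
\[
 2h_n\sum_{i<n}(f_i-f_n)-h_nr_n\sum_{i<n}\norm{g_i}^2-nh_nr_n^{-n}\norm{g_n}^2
 =\sum_{i\ne j}\mu_{ij}Q_{ij}+\sum_{a,b}s_{ab}\ip{g_a}{g_b}.
\]

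\emph{Your sparse support cannot satisfy this identity.} On the Huber instances all $g_k$ coincide and both sides vanish. Since $(s_{ab})$ is PSD, this gives $\sum_b s_{ab}=0$ for every $a$. Use the $f_0$-balance $\sum_j(\mu_{0j}-\mu_{j0})=h_n$. Then row $0$ gives
\[
 \sum_b s_{0b}=h_n\Bigl(1-\sum_{i\ge1}i\,\mu_{i0}\Bigr),
\]
while the $\norm{g_0}^2$ coefficient gives
\[
 s_{00}=h_n(1-r_n)-2r_n\sum_{i\ge1}\mu_{i0}\ge0.
\]
Hence $\sum_{i\ge1}i\,\mu_{i0}=1$, but $\sum_{i\ge1}\mu_{i0}\le(1-r_n^2)/(2r_n)\le\frac34$ for $n\ge2$, because $r_n\ge\frac12$. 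So some backward pair $(i,0)$ with $i\ge2$ must carry positive weight.

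Consecutive pairs, star pairs and forward pairs $(k,n)$ contain no such pair. So your ansatz fails for every $n\ge2$, including the case $n=2$ you cite as guidance, even allowing an arbitrary PSD residual. Adding the reverse pairs $(n,k)$ removes this particular obstruction, but you still have no pattern for general $n$.

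\emph{The missing idea.} No residual is needed, and the certificate is dense. In the normalization above, the paper takes, for all $0\le a<b\le n$,
\[
 \mu_{ab}=(A_a-A_{a-1})(D_b+r_nD_{b+1}),\qquad
 \mu_{ba}=(A_{a-1}+r_nA_a)(D_b-D_{b+1}).
\]
Here $A$ is strictly increasing, $D$ is strictly decreasing, and $A_kD_{k+1}=\bigl(r_n^{-2(k+1)}-1\bigr)/2$. With this choice the mixed products cancel by telescoping and the diagonal coefficients match. Nonnegativity follows from the monotonicity of $A$ and $D$, which itself comes from $1+kh_n>r_n^{-k}$.
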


Theorem~\ref{thm:main} establishes the constant-step composability statement
at every finite horizon.  It also has a direct consequence for the original
purpose of \(s\)-composability: for each \(n\), the constant block
\((h_n,\ldots,h_n)\) is now available as an \(s\)-composable component in the
composition rules in \cite{GrimmerShuWang2025}.  Thus the result can be used
both in the last-iterate analysis below and in the construction of longer
composable schedules.  Before proving the theorem, we record the exact
terminal consequences that motivate the construction.

\subsection{Exact last-iterate step selection}

For normalized constant step \(\alpha>0\), define the worst-case final
gradient criterion
\begin{equation*}
 \mathcal G_n(\alpha)
 :=\sup
 \frac{\norm{\nabla f(x_n)}}{L\norm{x_0-x_*}},
\end{equation*}
where the supremum is over all dimensions and triples
\((f,x_0,x_*)\) with \(f\) convex and \(L\)-smooth,
\(x_*\in\arg\min f\), and \(x_0\neq x_*\), after \(n\) steps of gradient
descent
\[
 x_{k+1}=x_k-\frac{\alpha}{L}\nabla f(x_k).
\]

For relaxed proximal point, fix \(\lambda>0\), run
\begin{equation*}
 z_k=\prox_{\lambda F}(x_k),
 \qquad
 x_{k+1}=x_k+\alpha(z_k-x_k),
\end{equation*}
and define
\begin{equation*}
 \mathcal R_n(\alpha)
 :=\sup
 \frac{\norm{x_n-z_n}}{\norm{x_0-x_*}},
\end{equation*}
where the supremum is over all dimensions and triples
\((F,x_0,x_*)\) with \(F\) proper, closed, and convex,
\(x_*\in\arg\min F\), and \(x_0\neq x_*\).

Both criteria have the same two elementary lower obstructions.

\begin{lemma}[Progress and oscillation obstructions]
\label{lem:two-obstructions}
For every \(n\geq1\) and \(\alpha>0\),
\begin{equation*}
 \mathcal G_n(\alpha)\geq B_n(\alpha),
 \qquad
 \mathcal R_n(\alpha)\geq B_n(\alpha),
\end{equation*}
where
\begin{equation*}
 B_n(\alpha)
 :=\max\left\{
 \frac{1}{1+n\alpha},
 |1-\alpha|^n
 \right\}.
\end{equation*}
\end{lemma}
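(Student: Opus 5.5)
Both inequalities come from explicit one-dimensional instances, one for each term in the maximum defining \(B_n(\alpha)\). Since \(\mathcal G_n\) and \(\mathcal R_n\) are suprema, it suffices to exhibit, for each term, an admissible instance whose ratio equals that term. We normalize \(L=1\) in the smooth case by scaling.

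\emph{Oscillation obstruction.} For \(\mathcal G_n\), we take the quadratic \(f(x)=\tfrac12x^2\) on \(\R\), with \(x_*=0\) and \(x_0=1\). Then \(x_{k+1}=(1-\alpha)x_k\), so \(x_n=(1-\alpha)^n\). Hence \(|\nabla f(x_n)|=|1-\alpha|^n\), which gives \(\mathcal G_n(\alpha)\geq|1-\alpha|^n\). For \(\mathcal R_n\), we take \(F=\iota_{\{0\}}\), the indicator of the origin, so that \(\prox_{\lambda F}\equiv0\). Then \(z_k=0\) and \(x_{k+1}=(1-\alpha)x_k\), and therefore \(|x_n-z_n|=|1-\alpha|^n|x_0|\). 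Equivalently, \(\lambda F_\lambda=\tfrac12x^2\) is the same quadratic through the Moreau-envelope correspondence used in Proposition~\ref{prop:terminal}(ii).

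\emph{Progress obstruction.} We use a Huber function, which is the Moreau envelope of a scaled absolute value, so the construction serves both criteria at once. Fix \(R=1\), \(x_0=1\), \(x_*=0\), and \(F(x)=a|x|\) with \(a=1/(\lambda(1+n\alpha))\). Then \(\lambda F_\lambda\) is the Huber function: it has slope \(\lambda a=1/(1+n\alpha)\) for \(|x|\geq\lambda a\) and is quadratic inside. As in the proof of Theorem~\ref{thm:general-proximal-gap}, one checks inductively that
\[
 x_k=1-\lambda a\,k\alpha,
 \qquad
 z_k=x_k-\lambda a .
\]
This requires \(x_k\geq\lambda a\) for \(k\leq n\), that is, \(1-n\alpha\lambda a\geq\lambda a\). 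That inequality is exactly \(1\geq\lambda a(1+n\alpha)\), which holds with equality at the boundary. Hence \(x_n=\lambda a\) still lies in the linear region (at its endpoint), and \(x_n-z_n=\lambda a=1/(1+n\alpha)\). This gives \(\mathcal R_n(\alpha)\geq1/(1+n\alpha)\).

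For \(\mathcal G_n\), the same Huber function \(f=\lambda F_\lambda\) is convex and \(1\)-smooth with minimizer \(0\). Gradient descent on it produces the same iterates, and \(|f'(x_n)|=1/(1+n\alpha)\). This gives \(\mathcal G_n(\alpha)\geq 1/(1+n\alpha)\). Taking the maximum of the two lower bounds proves the lemma.

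The only delicate point is the boundary case \(x_n=\lambda a\) at the kink of soft-thresholding. There the formula \(z_n=x_n-\lambda a=0\) still holds, since soft-thresholding is continuous. One could instead choose the threshold slightly smaller and let it tend to the boundary value; the supremum is unaffected.
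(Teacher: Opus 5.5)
Your proof is correct and follows the paper's argument. For the oscillation branch you use the quadratic and the indicator of the origin. For the progress branch you use the Huber function and $F=a|x|$ with $a=\norm{x_0-x_*}/(\lambda(1+n\alpha))$, with the iterates staying on the linear piece through step $n$ and ending exactly at the kink. Your only addition is to say explicitly that the Huber instance is the Moreau envelope $\lambda F_\lambda$ of the absolute-value instance, so one trajectory serves both criteria; the paper presents the two instances separately and leaves this implicit.
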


\begin{proof}
For the smooth progress branch, scale to \(L=1\), take \(x_0=R>0\), set
\(\tau=R/(1+n\alpha)\), and use the one-dimensional Huber function
\[
 \phi_\tau(x)=
 \begin{cases}
 \tfrac12x^2,& |x|\leq\tau,\\
 \tau|x|-\tfrac12\tau^2,& |x|\geq\tau.
 \end{cases}
\]
The iterates remain on the affine branch through time \(n\), so the final
gradient has magnitude \(\tau\).  The quadratic \(f(x)=Lx^2/2\) gives the
second branch.

For the proximal progress branch, take \(x_0=D>0\) and
\[
 F(x)=a|x|,
 \qquad
 a=\frac{D}{\lambda(1+n\alpha)}.
\]
Then \(x_k=D-k\alpha\lambda a\), \(x_n=\lambda a\), and \(z_n=0\),
giving \((1+n\alpha)^{-1}\).  Taking \(F=\delta_{\{0\}}\) gives
\(z_k=0\) and \(x_n=(1-\alpha)^nx_0\), yielding the oscillatory branch.
\(\)
\end{proof}

The two lower-bound branches defining \(B_n\) have a unique balancing point
on \((1,\infty)\).  Equating the progress branch
\((1+n\alpha)^{-1}\) with the oscillation branch \((\alpha-1)^n\) gives
precisely the defining equation of \(h_n\).  Theorem~\ref{thm:main} supplies
the matching upper bound at that point.

\begin{theorem}[Exact optimal constant choice for the final iterate]
\label{thm:minimax}
For every \(n\geq1\),
\begin{align*}
 \inf_{\alpha>0}\mathcal G_n(\alpha)
 &=\eta_n,
 &
 \operatorname*{argmin}_{\alpha>0}\mathcal G_n(\alpha)
 &=\{h_n\},\\
 \inf_{\alpha>0}\mathcal R_n(\alpha)
 &=\eta_n,
 &
 \operatorname*{argmin}_{\alpha>0}\mathcal R_n(\alpha)
 &=\{h_n\}.
\end{align*}
\end{theorem}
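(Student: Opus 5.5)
The plan is to sandwich each criterion between the lower obstruction of Lemma~\ref{lem:two-obstructions} and the upper bound that Theorem~\ref{thm:main} yields through Proposition~\ref{prop:terminal}. The key observation is that the lower envelope \(B_n\) is uniquely minimized at \(h_n\), with value \(\eta_n\). At \(h_n\), the upper bound from composability equals exactly \(\eta_n\).

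\emph{Step 1 (upper bound at \(h_n\)).} By Theorem~\ref{thm:main}, the constant schedule \((h_n,\ldots,h_n)\) is \(s\)-composable with rate \(\eta_n\), and \(S+1=1+nh_n=\eta_n^{-1}\).
\begin{itemize}
\item Proposition~\ref{prop:terminal}(i) gives \(\norm{\nabla f(x_n)}\le L\eta_n\norm{x_0-x_*}\) for every convex \(L\)-smooth \(f\), so \(\mathcal G_n(h_n)\le\eta_n\).
\item The residual bound in Proposition~\ref{prop:terminal}(ii) reads \(\norm{x_n-z_n}/\lambda\le \norm{x_0-x_*}/(\lambda(S+1))\). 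Multiplying by \(\lambda\) gives \(\mathcal R_n(h_n)\le\eta_n\).
\end{itemize}
Combined with Lemma~\ref{lem:two-obstructions}, we get \(\mathcal G_n(h_n),\mathcal R_n(h_n)\ge B_n(h_n)\). Since \((1+nh_n)^{-1}=\eta_n=(h_n-1)^n\), we have \(B_n(h_n)=\eta_n\), so both criteria equal \(\eta_n\) at \(h_n\).

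\emph{Step 2 (strict lower bound elsewhere).} It then suffices to show \(B_n(\alpha)>\eta_n\) for every \(\alpha>0\) with \(\alpha\ne h_n\), since Lemma~\ref{lem:two-obstructions} gives \(\mathcal G_n,\mathcal R_n\ge B_n\). This yields both the infimum values and uniqueness of the minimizer. We split into cases.
\begin{itemize}
\item If \(0<\alpha<h_n\), then \((1+n\alpha)^{-1}>(1+nh_n)^{-1}=\eta_n\), because \(\alpha\mapsto(1+n\alpha)^{-1}\) is strictly decreasing.
\item If \(\alpha>h_n\), then \(\alpha>h_n>1\), so \(|1-\alpha|^n=(\alpha-1)^n\). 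This is strictly increasing on \((1,\infty)\), hence exceeds \((h_n-1)^n=r_n^n=\eta_n\).
\end{itemize}
Thus \(B_n\) has a strict global minimum at \(h_n\). Here we use that \(h_n\in(1,2)\), as established when \(r_n\in(0,1)\) was defined.

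\emph{Main obstacle.} Essentially all the difficulty lies in Theorem~\ref{thm:main}, which supplies the matching upper bound and which I take as given here. Within this proof, the only points needing care are bookkeeping. First, the suprema defining \(\mathcal G_n,\mathcal R_n\) range over all dimensions and all \(L\) (respectively \(\lambda\)), and the bounds of Proposition~\ref{prop:terminal} are uniform in these. Second, the lower-bound instances in Lemma~\ref{lem:two-obstructions} are valid for every \(\alpha>0\), including \(\alpha\ge2\), where the quadratic and indicator examples still give \(|1-\alpha|^n\).
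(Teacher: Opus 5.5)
Your proposal is correct and follows the paper's proof. It uses the common lower bound \(B_n\) from Lemma~\ref{lem:two-obstructions}, the matching upper bound \(\eta_n\) at \(h_n\) obtained from Theorem~\ref{thm:main} through Proposition~\ref{prop:terminal}, and strict monotonicity of the two branches to get uniqueness; splitting the cases at \(h_n\) instead of at \(1\), as the paper does, is only a cosmetic and slightly cleaner variation.
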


\begin{proof}
Lemma~\ref{lem:two-obstructions} gives the common lower bound
\(B_n(\alpha)\).  At \(\alpha=h_n\),
Theorem~\ref{thm:main} and Proposition~\ref{prop:terminal} give
\[
 \mathcal G_n(h_n)\leq\eta_n,
 \qquad
 \mathcal R_n(h_n)\leq\eta_n.
\]
By the definitions of \(h_n\) and \(\eta_n\), the two branches of \(B_n\) meet at \(h_n\) with
common value \(\eta_n\), so both upper bounds are exact.  On
\((1,\infty)\), the first branch is strictly decreasing and the second
strictly increasing; on \((0,1]\), the first branch is strictly larger than
its value at \(h_n>1\).  Thus \(h_n\) is the unique minimizer for both
criteria. \(\)
\end{proof}

For the smooth criterion, on the conjectured range \(0<\alpha\leq2\), the
identity \(\mathcal G_n(\alpha)=B_n(\alpha)\) is the smooth-convex specialization
of Conjecture~3 in \cite{TaylorHendrickxGlineur2017}.  We do not resolve this
identity over its full conjectured range.  Theorem~\ref{thm:minimax} proves the
associated stepsize-selection statement and, in fact, identifies the unique
minimizer over all \(\alpha>0\).

For relaxed proximal point,
Theorem~3.1 and Remark~3.1 of \cite{WangMaYangZhou2025} establish tight
constant-relaxation guarantees in their stated range.  Our result instead
optimizes the final residual over all positive constant relaxations; the
oscillatory obstruction is essential beyond that range and pins down the
unique optimum \(h_n\).

\section{Proof of constant-step \(s\)-composability}
\label{sec:certificate}

We now prove Theorem~\ref{thm:main}.  The argument has three steps.  First, a
Proposition~6 of \cite{GrimmerShuWang2025} reduces
\(s\)-composability to a single inequality along the gradient-descent history.
For a constant step, that inequality simplifies substantially.  We then prove
the simplified inequality by writing it as a nonnegative linear combination
of the smooth-convex interpolation inequalities \(Q_{ij}\geq0\).

\subsection{A sufficient history inequality}

We use only the following sufficient direction of the history-inequality
characterization.

\begin{lemma}[History inequality]
\label{lem:history}
Suppose \(\eta=(1+\sum_{i<n}h_i)^{-1}\) and every gradient-descent trajectory
generated by the schedule satisfies
\begin{align}
&\sum_{i=0}^{n-1}h_i
\left(2(f_i-f_n)+\norm{g_i}^2+2\ip{g_i}{x_0-x_i}\right)
-\norm{x_n-x_0}^2
-\frac{1-\eta}{\eta^2}\norm{g_n}^2
\geq0.
\label{eq:history}
\end{align}
Then the potential inequality in Definition~\ref{def:s-composable} holds.
\end{lemma}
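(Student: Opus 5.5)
The plan is to show that the potential inequality, after rescaling, equals the history inequality \eqref{eq:history} plus a nonnegative combination of the interpolation inequalities between \(x_*\) and the iterates \(x_i\), \(i<n\). No other ingredient should be needed.

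\emph{Step 1: rescale the potential.} Put \(S:=\sum_{i<n}h_i\), so that \(\eta=1/(1+S)\) and \((1-\eta)/\eta=S\). Multiply the potential inequality of Definition~\ref{def:s-composable} by \(2/\eta^2\). It becomes equivalent to
\[
 \norm{x_0-x_*}^2-\norm{x_n-x_*}^2-\frac{1-\eta}{\eta^2}\norm{g_n}^2-2S(f_n-f_*)\geq0 .
\]

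\emph{Step 2: expand the distance difference.} Write \(a=x_0-x_*\) and \(x_n-x_*=a+(x_n-x_0)\). The gradient-descent recursion gives \(x_n-x_0=-\sum_{i<n}h_ig_i\). Hence
\[
 \norm{x_0-x_*}^2-\norm{x_n-x_*}^2
 =2\sum_{i<n}h_i\ip{g_i}{x_0-x_*}-\norm{x_n-x_0}^2 .
\]

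\emph{Step 3: subtract the history expression.} Take the left-hand side of Step 1 and subtract the left-hand side of \eqref{eq:history}. The terms \(\norm{x_n-x_0}^2\) and \(\frac{1-\eta}{\eta^2}\norm{g_n}^2\) cancel. Write \(-2S(f_n-f_*)\) as \(\sum_{i<n}h_i(-2f_n+2f_*)\). This cancels the \(+2h_if_n\) terms coming from \eqref{eq:history}. Finally, combine \(\ip{g_i}{x_0-x_*}-\ip{g_i}{x_0-x_i}=\ip{g_i}{x_i-x_*}\). The difference should then be
\[
 \sum_{i<n}h_i\Bigl(2(f_*-f_i)-2\ip{g_i}{x_*-x_i}-\norm{g_i}^2\Bigr)
 =\sum_{i<n}h_i\,Q_{*i},
\]
where \(g_*=\nabla f(x_*)=0\) is used.

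\emph{Step 4: conclude.} Each \(Q_{*i}\geq0\) by the smooth-convex interpolation inequality, and each \(h_i>0\). So the rescaled potential expression equals the history expression plus a nonnegative quantity. The history expression is nonnegative by hypothesis, which gives the potential inequality.

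The only delicate point is the bookkeeping in Step 3. One must check that the function-value terms recombine into \(f_*-f_i\) with the same weight \(h_i\) as the inner-product and gradient terms, and this is exactly where the identity \(S=(1-\eta)/\eta\) is used. The other rate identity, \(\eta=\prod_i(h_i-1)\), plays no role in this direction.
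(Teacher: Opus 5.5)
Your proof is correct and is the paper's argument: both show that the history expression plus \(\sum_{i<n}h_iQ_{*i}\) (with \(g_*=0\)) equals the potential multiplied by \(2/\eta^2\), using \(\sum_{i<n}h_ig_i=x_0-x_n\) and \(\sum_{i<n}h_i=(1-\eta)/\eta\). You simply run the same identity in the subtraction direction.
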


\begin{proof}
Set \(g_*=0\) and add the nonnegative interpolation inequalities
\(\sum_{i<n}h_iQ_{*i}\) to the left-hand side of
\eqref{eq:history}.  Since
\(\sum_{i<n}h_ig_i=x_0-x_n\) and
\(\sum_{i<n}h_i=(1-\eta)/\eta\), the resulting expression is
\begin{align*}
\frac{2(1-\eta)}{\eta}(f_*-f_n)
+\norm{x_0-x_*}^2-\norm{x_n-x_*}^2
-\frac{1-\eta}{\eta^2}\norm{g_n}^2.
\end{align*}
It is therefore nonnegative.  Multiplying by \(\eta^2/2\) and rearranging
gives exactly \eqref{eq:s-potential}. \(\)
\end{proof}

For the remainder of the section, fix \(n\) and write
\begin{equation*}
 r=r_n,\qquad h=1+r,\qquad
 \eta=r^n=\frac{1}{1+nh}.
\end{equation*}
The constant-step trajectory satisfies
\begin{equation*}
 x_i=x_0-h\sum_{j<i}g_j.
\end{equation*}
Substituting this identity into \eqref{eq:history}, all mixed products coming
from \(2h\sum_i\ip{g_i}{x_0-x_i}\) cancel those in
\(\norm{x_n-x_0}^2\):
\begin{equation*}
2h^2\sum_{0\leq j<i<n}\ip{g_i}{g_j}
-h^2\left\|\sum_{i<n}g_i\right\|^2
=-h^2\sum_{i<n}\norm{g_i}^2.
\end{equation*}
Using \(h-h^2=-hr\), \(1-\eta=nh\eta\), and \(\eta=r^n\), Lemma~\ref{lem:history}
shows that it is enough to prove
\begin{equation}
2h\sum_{i=0}^{n-1}(f_i-f_n)
-hr\sum_{i=0}^{n-1}\norm{g_i}^2
-nhr^{-n}\norm{g_n}^2
\geq0.
\label{eq:reduced-history}
\end{equation}
We prove a stronger statement: the left-hand side of
\eqref{eq:reduced-history} is exactly a nonnegative weighted sum of the
interpolation inequalities \(Q_{ij}\geq0\).

\subsection{Two auxiliary sequences}

We now construct the nonnegative weights that will prove
\eqref{eq:reduced-history}.  The target expression contains only function
values and squared gradient norms, so after expanding a weighted sum of the
interpolation inequalities \(Q_{ij}\geq0\), every mixed term
\(\ip{g_a}{g_b}\), \(a\neq b\), must disappear.  At the same time, the
remaining coefficients must match those in \eqref{eq:reduced-history}.

Rather than choosing the \(O(n^2)\) weights one by one, we generate them from
two scalar sequences, \(A_k\) and \(D_k\).  The sequence \(A_k\) controls how
the weights vary with the earlier index, while \(D_k\) controls how they vary
with the later index.  Their recursions are chosen so that the sums appearing
in the coefficient calculations simplify, and so that all resulting weights
remain nonnegative.  The next lemma collects exactly the identities needed
for this verification.

Define
\begin{equation*}
 p_k:=\frac{r^{-2k}-1}{2},\qquad 0\leq k\leq n,
\end{equation*}
so that \(p_0=0\).  Set
\begin{equation*}
 A_{-1}=0,\qquad A_0=1,\qquad D_1=p_1,\qquad D_{n+1}=0.
\end{equation*}
For \(1\leq k<n\), define
\begin{align*}
 q_k
 &:=\frac{\bigl(1+kh+r^{-k}\bigr)\bigl(1+r^{-(k+1)}\bigr)}
 {\bigl(1+r^{-k}\bigr)\bigl(1+kh-r^{-k}\bigr)},\\
 s_k
 &:=\frac{\bigl(1+kh-r^{-k}\bigr)\bigl(r^{-(k+1)}-1\bigr)}
 {\bigl(r^{-k}-1\bigr)\bigl(1+kh+r^{-k}\bigr)},
\end{align*}
and recursively set
\begin{equation*}
 A_k=q_kA_{k-1},\qquad D_{k+1}=s_kD_k.
\end{equation*}
The following elementary identities are the only properties of these
sequences used in the certificate.

\begin{lemma}[Auxiliary-sequence identities]
\label{lem:scalar}
For every \(0\leq k<n\),
\begin{align}
&0<A_0<A_1<\cdots<A_{n-1},
\qquad
D_1>D_2>\cdots>D_n>D_{n+1}=0,
\label{eq:monotone-AD}\\
&A_kD_{k+1}=p_{k+1},
\label{eq:product-AD}\\
&\frac{\sum_{i=0}^{k-1}A_i}{A_k}
 =\frac{1+kh-r^{-k}}{h(1+r^{-(k+1)})},
\qquad
\frac{\sum_{j=k+2}^{n}D_j}{D_{k+1}}
 =\frac{1+(k+1)h-r^{-(k+1)}}{h(r^{-(k+1)}-1)}.
\label{eq:cumulative-AD}
\end{align}
Consequently,
\begin{equation}
 p_{k+1}\left(
 \frac{\sum_{j=k+2}^{n}D_j}{D_{k+1}}
 -\frac{\sum_{i=0}^{k-1}A_i}{A_k}
 \right)
 =k+1-\frac{1-r}{h}p_{k+1}.
\label{eq:cumulative-difference}
\end{equation}
Empty sums are understood as zero.
\end{lemma}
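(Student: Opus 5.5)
Write \(u_k:=r^{-k}\); then \(u_k>1\) for \(k\geq1\), \(u_n=1+nh\), and \(p_k=(u_k^2-1)/2\). All four claims are identities or inequalities in \(u_k\) and \(kh\), so the plan is to prove each one by induction on \(k\), using only the recursions \(A_k=q_kA_{k-1}\) and \(D_{k+1}=s_kD_k\). The key observation is that \(q_k\) and \(s_k\) telescope against the claimed closed forms.

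\textbf{Product identity \eqref{eq:product-AD}.} At \(k=0\) we have \(A_0D_1=p_1\) by definition. For the inductive step,
\[
\frac{A_kD_{k+1}}{A_{k-1}D_k}=q_ks_k
=\frac{(1+u_{k+1})(u_{k+1}-1)}{(1+u_k)(u_k-1)}
=\frac{u_{k+1}^2-1}{u_k^2-1}=\frac{p_{k+1}}{p_k},
\]
because the factors \(1+kh\pm u_k\) cancel between \(q_k\) and \(s_k\).

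\textbf{Cumulative identities \eqref{eq:cumulative-AD}.} For the \(A\)-identity, set \(\alpha_k:=\sum_{i<k}A_i/A_k\). It satisfies \(\alpha_0=0\), which matches the claimed formula since \(1-u_0=0\). It also satisfies \(\alpha_k=(\alpha_{k-1}+1)/q_k\). So it suffices to check the one-step identity
\[
\frac{1+(k-1)h-u_{k-1}}{h(1+u_k)}+1
=q_k\,\frac{1+kh-u_k}{h(1+u_{k+1})}
=\frac{1+kh+u_k}{h(1+u_k)}.
\]
After multiplying by \(h(1+u_k)\), this becomes \(1+(k-1)h-u_{k-1}+h+hu_k=1+kh+u_k\), which is equivalent to \(hu_k-u_k=u_{k-1}\). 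That holds because \(u_k r=u_{k-1}\) and \(h-1=r\).

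For the \(D\)-identity, run the induction downward from \(k=n-1\). Set \(\delta_{k}:=\sum_{j\geq k+2}D_j/D_{k+1}\). The base case \(\delta_{n-1}=0\) requires \(1+nh-u_n=0\), which is exactly the defining equation of \(r_n\). For the step, use \(\delta_{k-1}=s_k(1+\delta_k)\) and check
\[
s_k\Bigl(1+\tfrac{1+(k+1)h-u_{k+1}}{h(u_{k+1}-1)}\Bigr)=\tfrac{1+kh-u_k}{h(u_k-1)}.
\]
Inside the parentheses the expression equals \((1+kh+hu_{k+1}-u_{k+1})/(h(u_{k+1}-1))=(1+kh+u_k)/(h(u_{k+1}-1))\), again using \(ru_{k+1}=u_k\). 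Multiplying by \(s_k\) then cancels everything except the target.

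\textbf{Monotonicity \eqref{eq:monotone-AD}.} This needs \(q_k>1\), \(0<s_k<1\), and \(D_1>0\). The last holds because \(p_1>0\). First I will show positivity of \(1+kh-u_k\) for \(k<n\), which is the main obstacle. Consider \(\phi(k)=1+kh-r^{-k}\). It is concave in \(k\), vanishes at \(k=0\) and \(k=n\), and \(\phi(1)=1+h-1/r>0\) as long as \(r\) is not too small. Concavity then gives \(\phi(k)>0\) on \((0,n)\). To get \(\phi(1)>0\) in general, I would use concavity directly: \(\phi(0)=\phi(n)=0\) together with strict concavity already forces \(\phi>0\) on \((0,n)\), so no separate check of \(\phi(1)\) is needed.

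With \(\phi(k)>0\), the inequality \(q_k>1\) reduces to \((1+kh+u_k)(1+u_{k+1})>(1+u_k)(1+kh-u_k)\), which is immediate since \(u_{k+1}>u_k\) and \(1+kh+u_k>1+kh-u_k\). Positivity of \(s_k\) is clear. The inequality \(s_k<1\) reduces to \(\phi(k)(u_{k+1}-1)<(u_k-1)(1+kh+u_k)\). I expect this to follow after substituting \(u_{k+1}=u_k/r\) and using \(\phi(k)<1+kh+u_k\). However, \(u_{k+1}-1>u_k-1\), so this needs care, and I would fall back on the closed form \(D_{k+1}=p_{k+1}/A_k\) together with an explicit comparison if the direct inequality fails.

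\textbf{Consequence \eqref{eq:cumulative-difference}.} Subtract the two closed forms, multiply by \(p_{k+1}=(u_{k+1}-1)(u_{k+1}+1)/2\), and simplify using \(hu_{k+1}-u_{k+1}=u_k\). This is a routine rational-function identity.
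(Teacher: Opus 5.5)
Your product identity, the two cumulative closed forms (via \(\alpha_k=(1+\alpha_{k-1})/q_k\) and \(\delta_{k-1}=s_k(1+\delta_k)\)), the consequence \eqref{eq:cumulative-difference}, and \(q_k>1\), \(s_k>0\) are all correct and follow the paper's route. Your strict-concavity argument for \(1+kh-r^{-k}>0\), using \(\phi(0)=\phi(n)=0\), is a valid substitute for the paper's use of the decreasing function \(\log(1+hx)/x\).

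\textbf{The gap is \(s_k<1\)}, i.e.\ \(D_{k+1}<D_k\), which you leave unproved. This matters: it is exactly what makes the backward multipliers \(\lambda_{ba}=(A_{a-1}+rA_a)(D_b-D_{b+1})\) nonnegative in the certificate. Your fallback is circular. By the product identity, \(D_{k+1}<D_k\) is equivalent to \(p_{k+1}/A_k<p_k/A_{k-1}\), i.e.\ \(q_ks_k<q_k\), which is \(s_k<1\) again. Your termwise comparison also cannot work. In \(\phi(k)(u_{k+1}-1)<(u_k-1)(1+kh+u_k)\), the first factor on the left is smaller than its counterpart, but the second is larger.

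The missing step is to expand the difference. With \(a=1+kh\) and \(b=u_k\),
\[
(b-1)(a+b)-(a-b)\Bigl(\frac{b}{r}-1\Bigr)=\frac{b}{r}\,E_k,\qquad E_k:=hb-(1-r)a-2r,
\]
so \(s_k<1\) is equivalent to \(E_k>0\). The paper proves this by induction: \(E_1=h(1-r)^2/r>0\) and \(E_{k+1}-E_k=h(1-r)(u_{k+1}-1)>0\). Alternatively, Bernoulli's inequality \(u_k=(1/r)^k\geq 1+k(1-r)/r\) gives directly \(E_k\geq k\,h(1-r)^2/r>0\). Either argument closes the gap. Note that this inequality uses only \(r\in(0,1)\) and \(h=1+r\), not the defining equation of \(r_n\).
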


\begin{proof}
For \(1\leq k<n\), the function
\(x\mapsto\log(1+hx)/x\) is strictly decreasing on \((0,\infty)\).  Since
\(1+nh=r^{-n}\),
\begin{equation*}
 \frac{\log(1+kh)}{k}
 >\frac{\log(1+nh)}{n}
 =-\log r,
\end{equation*}
so \(1+kh>r^{-k}\).  Hence every denominator above is positive.  The formula
for \(q_k\) immediately gives \(q_k>1\), and \(s_k>0\).

It remains to check \(s_k<1\).  Put
\(a=1+kh\) and \(b=r^{-k}\).  The denominator of
\(s_k\) minus its numerator equals \(bE_k/r\), where
\begin{equation*}
 E_k=hb-(1-r)a-2r,
 \qquad
 E_1=\frac{h(1-r)^2}{r}>0,
\end{equation*}
and
\begin{equation*}
 E_{k+1}-E_k=h(1-r)(r^{-(k+1)}-1)>0.
\end{equation*}
Thus \(0<s_k<1\), proving \eqref{eq:monotone-AD}.  Moreover,
\begin{equation*}
 q_ks_k
 =\frac{r^{-2(k+1)}-1}{r^{-2k}-1}
 =\frac{p_{k+1}}{p_k}.
\end{equation*}
Since \(A_0D_1=p_1\), induction gives \eqref{eq:product-AD}.

For the first identity in \eqref{eq:cumulative-AD}, let
\(L_k=(\sum_{i<k}A_i)/A_k\).  Then \(L_0=0\) and
\(L_k=(1+L_{k-1})/q_k\); substituting the displayed formula for \(q_k\)
gives the claimed closed form by induction.  Similarly, with
\(R_k=(\sum_{j=k+2}^{n}D_j)/D_{k+1}\), one has \(R_{n-1}=0\) and
\(R_{k-1}=s_k(1+R_k)\), which gives the second closed form by backward
induction.

Finally, substitute the two formulas in \eqref{eq:cumulative-AD} into the
left-hand side of \eqref{eq:cumulative-difference}.  With
\(t=r^{-(k+1)}\), the numerator reduces to
\begin{equation*}
 2(k+1)h-(1-r)(t^2-1),
\end{equation*}
and \(p_{k+1}=(t^2-1)/2\), yielding
\eqref{eq:cumulative-difference}. \(\)
\end{proof}

\subsection{The interpolation certificate}

We now define the multipliers.  For every \(0\leq a<b\leq n\), set
\begin{align}
 \lambda_{ab}
 &:=(A_a-A_{a-1})(D_b+rD_{b+1}),
\label{eq:lambda-forward}\\
 \lambda_{ba}
 &:=(A_{a-1}+rA_a)(D_b-D_{b+1}).
\label{eq:lambda-backward}
\end{align}
By Lemma~\ref{lem:scalar}, all these multipliers are nonnegative.  A direct
expansion gives the useful identity
\begin{equation}
 \lambda_{ab}+\lambda_{ba}
 =h\bigl(A_aD_b-A_{a-1}D_{b+1}\bigr),
 \qquad a<b.
\label{eq:pair-identity}
\end{equation}
We claim that
\begin{align}
\sum_{i\ne j}\lambda_{ij}Q_{ij}
={}&2h\sum_{i=0}^{n-1}(f_i-f_n)
-hr\sum_{i=0}^{n-1}\norm{g_i}^2
-nhr^{-n}\norm{g_n}^2.
\label{eq:exact-certificate}
\end{align}
Since each \(Q_{ij}\geq0\), this identity immediately implies
\eqref{eq:reduced-history}.  It remains only to verify the coefficients.

\paragraph{Function values.}
Let
\begin{equation*}
 d_i:=\sum_{j\ne i}(\lambda_{ij}-\lambda_{ji})
\end{equation*}
be the coefficient balance attached to index \(i\).  For \(0\leq k<n\),
internal terms cancel when these balances are summed, so
\begin{equation*}
 \sum_{i=0}^k d_i
 =\sum_{a\leq k<b}(\lambda_{ab}-\lambda_{ba}).
\end{equation*}
Expanding \eqref{eq:lambda-forward}--\eqref{eq:lambda-backward} and telescoping
in \(A\) and \(D\) gives
\begin{align*}
 \sum_{i=0}^k d_i
 &=(1-r)A_kD_{k+1}
 +h\left(
 A_k\sum_{j=k+2}^{n}D_j
 -D_{k+1}\sum_{i=0}^{k-1}A_i
 \right)\\
 &=(1-r)p_{k+1}
 +hp_{k+1}\left(
 \frac{\sum_{j=k+2}^{n}D_j}{D_{k+1}}
 -\frac{\sum_{i=0}^{k-1}A_i}{A_k}
 \right)\\
 &=h(k+1),
\end{align*}
where we used \eqref{eq:product-AD} and
\eqref{eq:cumulative-difference}.  Hence \(d_i=h\) for every \(i<n\), while
\(\sum_i d_i=0\) gives \(d_n=-nh\).  Because \(Q_{ij}\) contains
\(2(f_i-f_j)\), the function-value coefficients in
\eqref{eq:exact-certificate} are therefore exactly \(2h\) for \(f_i\),
\(i<n\), and \(-2nh\) for \(f_n\).

\paragraph{Mixed gradient products.}
Fix \(a<b\).  The definitions telescope to
\begin{equation*}
 \sum_{i=0}^{a}\lambda_{ib}
 =A_a(D_b+rD_{b+1}),
 \qquad
 \sum_{i=b+1}^{n}\lambda_{ia}
 =(A_{a-1}+rA_a)D_{b+1}.
\end{equation*}
After substituting
\(x_i=x_0-h\sum_{j<i}g_j\) into \(Q_{ij}\), the coefficient of
\(\ip{g_a}{g_b}\) in \(\sum_{i\ne j}\lambda_{ij}Q_{ij}\) is
\begin{align*}
2(\lambda_{ab}+\lambda_{ba})
-2h\left(
 \sum_{i\leq a}\lambda_{ib}
 -\sum_{i>b}\lambda_{ia}
\right).
\end{align*}
The two telescoping identities and \eqref{eq:pair-identity} make this
coefficient equal to zero.  Thus every mixed product
\(\ip{g_a}{g_b}\), \(a\ne b\), cancels.

\paragraph{Squared gradient norms.}
For \(k<n\), denote the weights entering index \(k\) from smaller and larger
indices by
\begin{equation*}
 U_k:=\sum_{i<k}\lambda_{ik},
 \qquad
 V_k:=\sum_{i>k}\lambda_{ik}.
\end{equation*}
Then \(U_0=0\), and telescoping gives
\begin{equation*}
 U_k=A_{k-1}(D_k+rD_{k+1}) \quad (1\leq k<n),
 \qquad
 V_k=(A_{k-1}+rA_k)D_{k+1} \quad (0\leq k<n).
\end{equation*}
Using \eqref{eq:product-AD} at consecutive indices gives
\begin{equation}
 rV_k-U_k
 =r^2A_kD_{k+1}-A_{k-1}D_k
 =r^2p_{k+1}-p_k
 =\frac{1-r^2}{2}.
\label{eq:UV-identity}
\end{equation}
The same identity holds for \(k=0\) by direct substitution.

We already proved that \(d_k=h\) for \(k<n\).  Hence the total weight incident
to index \(k\) is \(h+2U_k+2V_k\).  The terms
\(-\norm{g_i-g_j}^2\) therefore contribute
\(-h-2U_k-2V_k\) to the coefficient of \(\norm{g_k}^2\).  The trajectory
terms in \(Q_{ij}\) contribute \(2hV_k\).  Thus, by
\eqref{eq:UV-identity}, the coefficient of \(\norm{g_k}^2\) is
\begin{equation*}
 -h-2U_k+2rV_k
 =-h+(1-r^2)
 =-hr,
 \qquad k<n.
\end{equation*}

At the terminal index,
\begin{equation*}
 U_n=\sum_{i<n}\lambda_{in}=A_{n-1}D_n=p_n,
\end{equation*}
and \(d_n=-nh\).  No trajectory term contributes to
\(\norm{g_n}^2\), so its coefficient is
\begin{equation*}
 nh-2p_n
 =nh-(r^{-2n}-1)
 =-nhr^{-n},
\end{equation*}
where the last equality uses \(r^{-n}=1+nh\).

All coefficients in \eqref{eq:exact-certificate} have now been verified.
Since every \(\lambda_{ij}\geq0\) and every \(Q_{ij}\geq0\),
\eqref{eq:reduced-history} follows.  Lemma~\ref{lem:history} gives the
potential inequality in Theorem~\ref{thm:main}, and
\(\prod_{i<n}(h-1)=r^n=\eta\) gives the second rate identity in
Definition~\ref{def:s-composable}.  This completes the proof of
Theorem~\ref{thm:main}.

\section{The silver schedule}
\label{sec:silver}

For the constant schedule, \(s\)-composability had to be established. For the
silver schedule, it is already known, so
Theorem~\ref{thm:general-proximal-gap} can be applied directly to the remaining
terminal-performance question.

Let \(\rho=1+\sqrt2\) and \(N=2^m-1\).  For the original silver schedule \cite{AltschulerParrilo2025}, Lemma~6 of
\cite{GrimmerShuWang2025} gives \(s\)-composability, and its total step length
satisfies
\begin{equation*}
 S+1=\rho^m.
\end{equation*}
Run relaxed proximal point with this schedule using the iteration from
Theorem~\ref{thm:general-proximal-gap}, and evaluate
\(z_N=\prox_{\lambda F}(x_N)\).

\begin{theorem}[Exact last-iterate objective gap for the original silver schedule]
\label{thm:silver-proximal}
For every proper closed convex \(F\) with a minimizer and every
\(x_*\in\arg\min F\),
\begin{equation*}
 F(z_N)-F_*
 \leq
 \frac{\norm{x_0-x_*}^2}{4\lambda\rho^m}.
\end{equation*}
\end{theorem}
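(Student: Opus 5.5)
The plan is to apply the terminal-certificate machinery directly, since $s$-composability of the original silver schedule is already available from Lemma~6 of \cite{GrimmerShuWang2025}. Let $(h_0,\ldots,h_{N-1})$ with $N=2^m-1$ denote the silver schedule, and let $\eta$ be its $s$-composability rate. Then Definition~\ref{def:s-composable} gives
\[
 \eta=\frac{1}{1+S},\qquad S=\sum_{k<N}h_k .
\]
The stated total-length identity $S+1=\rho^m$ therefore yields $\eta=\rho^{-m}$.

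The first step is to check the hypotheses of Proposition~\ref{prop:terminal}(ii).
\begin{enumerate}
\item The silver steps are positive; they are built recursively from $\sqrt2$ and $1+\rho^{k-1}$-type quantities.
\item The schedule is $s$-composable with rate $\eta$.
\item The relaxed proximal iteration in the statement is exactly the one in Proposition~\ref{prop:terminal}(ii), with $z_N=\prox_{\lambda F}(x_N)$.
\end{enumerate}
With these in place, the objective-gap consequence of the proposition gives
\[
 F(z_N)-F_*\le\frac{\norm{x_0-x_*}^2}{4\lambda(S+1)}=\frac{\norm{x_0-x_*}^2}{4\lambda\rho^m}.
\]
This is equivalently the upper half of Theorem~\ref{thm:general-proximal-gap} with $\eta=\rho^{-m}$.

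For completeness I would also note that the lower-bound half of Theorem~\ref{thm:general-proximal-gap} applies verbatim: the instance $F(x)=a|x|$ with $a=R/(2\lambda\rho^m)$ attains equality. This supports the "with equality" claim in the contributions, although the displayed statement only asks for the inequality.

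The main obstacle, if any, is bookkeeping rather than analysis. Two points need care:
\begin{enumerate}
\item The rate $\eta$ in Lemma~6 of \cite{GrimmerShuWang2025} must be the one tied to $S+1=\rho^m$ via Definition~\ref{def:s-composable}. I would confirm this by the recursion $S_{m}+1=\rho(S_{m-1}+1)$: the silver schedule of length $2^{m}-1$ is two copies of length $2^{m-1}-1$ joined by a middle step $1+\rho^{m-2}$. I would verify this recursion by induction from $S_1+1=1+\sqrt2$.
\item The normalization must match. The schedule in \cite{GrimmerShuWang2025} is for $1$-smooth functions, and the Moreau-envelope reduction in Proposition~\ref{prop:terminal}(ii) uses $\lambda F_\lambda$, which is $1$-smooth, so no rescaling of the steps is needed.
\end{enumerate}
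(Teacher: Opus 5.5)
Your proposal is correct and is essentially the paper's proof: take $s$-composability from Lemma~6 of \cite{GrimmerShuWang2025}, use $S+1=\rho^m$ to get $\eta=\rho^{-m}$, and apply the upper bound of Theorem~\ref{thm:general-proximal-gap} (equivalently Proposition~\ref{prop:terminal}(ii)), where the same absolute-value instance gives equality. Your side check of the total length, $S_m+1=2\rho^{m-1}+\rho^{m-2}=\rho^{m-2}(2\rho+1)=\rho^m$, is also correct, though the paper simply takes this identity from the cited lemma.
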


\begin{proof}
By Lemma~6 of \cite{GrimmerShuWang2025}, the original silver schedule is
\(s\)-composable, and its total step length satisfies \(S+1=\rho^m\).
Hence its rate is \(\eta=\rho^{-m}\).  The claim follows directly from
Theorem~\ref{thm:general-proximal-gap}, which also provides a
one-dimensional absolute-value instance attaining equality. \(\)
\end{proof}

The point \(z_N\) is the same terminal proximal point studied by
Theorem~5.3 of \cite{WangMaYangZhou2025} for the original silver schedule.
Their residual estimate is tight.  In the discussion immediately following
the displayed objective bound in that theorem, they note that the
corresponding objective-gap estimate is not tight and leave its exact upper
bound open.  Theorem~\ref{thm:silver-proximal}
provides that value.  

\section{Discussion and conclusion}
\label{sec:discussion}

The two schedules highlight two distinct ways in which \(s\)-composability
can inform last-iterate analysis.  For the constant schedule, the main
challenge is existence: the rate identities single out a unique candidate,
but a nonnegative interpolation certificate is required to show that the
candidate is actually composable.  Once this is established, the terminal
bounds implied by composability meet matching lower examples and identify the
exact optimal constant choice for both the smooth final gradient and the
proximal final residual.

For the silver schedule, existence is already settled.  The same terminal
consequence can therefore be used directly, and the resulting proximal
objective estimate is exact.  This closes a previously open last-iterate
constant without modifying the schedule or constructing a new proximal
certificate.

These results show that \(s\)-composability can be used both to construct
longer schedules and to derive exact terminal guarantees. Several questions
remain.  The full
Taylor--Hendrickx--Glineur constant-step curve for the final gradient is still
open on its conjectured range away from its minimizer.  It is also natural to ask which other known
composable schedules hide similarly sharp terminal consequences, and whether
factorized certificates of the type used for the constant schedule can be
constructed for broader families of first-order or splitting methods.

\bibliographystyle{plainnat}
\bibliography{references}

\end{document}